\author [Bhattacharyya, Bhowmik]{Tirthankar Bhattacharyya, Mainak Bhowmik}
\address{Department of Mathematics, 	Indian Institute of Science, 
	Bangalore 560012, India}
\email{tirtha@iisc.ac.in;  mainakb@iisc.ac.in}
\author [Kumar]{Poornendu Kumar}
\address{Department of Mathematics, University of Manitoba, Winnipeg R3T 2N2, Canada }
\email{poornendu.kumar@umanitoba.ca}
\newcommand{\cA}{{\mathcal A}}
\newcommand{\cB}{{\mathcal B}}
\newcommand{\cC}{{\mathcal C}}
\newcommand{\cH}{{\mathcal H}}
\newcommand{\cP}{{\mathcal P}}
\newcommand{\cQ}{{\mathcal Q}}
\newcommand{\cW}{{\mathcal W}}
\newcommand{\bT}{{\mathbb{T}}}
\newcommand{\bD}{{\mathbb D}}
\newcommand{\bG}{{\mathbb G}}
\newcommand{\bC}{{\mathbb C}}
\newcommand{\bN}{{\mathbb N}}
\newtheorem{thm}{Theorem}[section]
\newtheorem{lemma}[thm]{Lemma}
\theoremstyle{plain}
\newtheorem{definition}[thm]{Definition}
\newtheorem{remark}[thm]{Remark}
\theoremstyle{definition}
\newtheorem{example}[thm]{Example}
\numberwithin{equation}{section}
\newcommand\blfootnote[1]{%
	\begingroup
	\renewcommand\thefootnote{}\footnote{#1}%
	\addtocounter{footnote}{-1}%
	\endgroup
}
\begin{document}
	\title{Herglotz's representation and Carath\'eodory's approximation}
	\maketitle
	{\blfootnote{{2020 {\em Mathematics Subject Classification}: 30E10, 32A25, 32A70, 47A56.\\
				{\em Key words and phrase}:  Herglotz's representation, Krein-Milman Theorem,  Multi-connected domains, Carath\'eodory's approximation, Inner functions, Szeg\"o Kernels, Harmonic measures,  Hardy space.}}}

	\maketitle
	\begin{abstract}
		Herglotz's representation of holomorphic functions with positive real part and Carath\'eodory's theorem on approximation by inner functions are two well-known classical results in the theory of holomorphic functions on the unit disc. We show that they are equivalent.
		
 On a multi-connected domain $\Omega$, a version of Heglotz's representation is known. Carath\'eodory's  approximation was not known. We formulate and prove it and then show that it is equivalent to the known form of Herglotz's representation. Additionally, it also enables us to prove a new Heglotz's representation in the style of Koranyi and Pukanszky. Of particular interest is the fact that the scaling technique of the disc  is replaced by Carath\'eodory's approximation theorem while proving this new form of Herglotz's representation. 
 
  Carath\'eodory's approximation theorem is also proved for matrix-valued functions on a multi-connected domain.  
	
	\end{abstract}

\vspace{5mm}

\noindent \textbf{Acknowledgement:} \\
	{\footnotesize{T. Bhattacharyya is supported by the J C Bose Fellowship  JCB/2021/000041 of SERB, M. Bhowmik is supported by the Prime Minister's Research Fellowship PM/MHRD-21-1274.03 and P. Kumar is supported by a PIMS postdoctoral fellowship.  This research is supported by the DST FIST program-2021 [TPN-700661]. \\
}}

\newpage

	\section{Introduction}
	
	The representation
	\begin{equation}
		f(z)= i \textup{Im}f(0) + \int_\bT \frac{\alpha+z}{\alpha-z}d\mu(\alpha) \label{Her}\\
	\end{equation}
	of a holomorphic function defined on the open unit disc $\mathbb D$ having non-negative real part is due to Herglotz \cite{Her}. The equation \eqref{Her} connects with the theory of operators on a Hilbert space in an obvious way. By considering $L^2(\mu)$ and the normal operator $U$ (with spectrum contained in the unit circle and hence a unitary operator in this case) of multiplication by the co-ordinate $z$, the equation above takes the form
	\begin{equation}
		f(z) = i \textup{Im}f(0) + (\langle (U + z)(U - z)^{-1} 1 , 1 \rangle) \textup{Re}f(0) \label{RF}
	\end{equation}
	where $1$ is the constant function $1$.  We shall call \eqref{Her} an {\em integral representation} whereas \eqref{RF} is known as a {\em realization formula}.
	
	\begin{definition} \label{Hclass}
		Given a domain $\Omega$ in $\mathbb C^d$ and a Hilbert space $\mathcal H$, the Herglotz class $\mathfrak H(\Omega, \mathcal B (\mathcal H))$ consists of all $\mathcal B(\mathcal H)$-valued functions defined on $\Omega$ which are holomorphic and have non-negative real part. For a fixed $\mathbf{z}_0 \in \Omega$, we shall denote by $\mathfrak{H}_{\mathbf{z}_0}(\Omega, \mathcal{B}(\mathcal{H}))$ the class of all $f \in \mathfrak{H}(\Omega, \mathcal{B}({\mathcal{H}}))$ satisfying $f(\mathbf{z}_0)= I$. When $\mathcal H = \mathbb C$, we shall denote these classes by $\mathfrak H(\Omega)$ and $\mathfrak{H}_{\mathbf{z}_0}(\Omega)$ respectively.
	\end{definition}
	
	Expectedly, the two forms above have been generalized in two different directions. See \cite{Kor-Puk} and \cite{AHR} where the authors derived integral representations for $\mathfrak{H}(\Omega)$ which hold in a large class of domains including the annulus, the polydisc and the Euclidean unit ball. On the other hand, see \cite{AY, Ball-V}, where realization formulae were obtained for a subclass of $\mathfrak{H}(\mathbb D^d)$. This note develops the first point of view, i.e., the integral representation in tandem with a classical theorem of Carath\'eodory. 
	
	The power of Herglotz's Theorem is well-known - see for example \cite{KBS} for a proof of the spectral theorem for unitary operators using it or \cite{AHR} for a proof of the von Neumann inequality using it. It plays a role in proving that the rational dilation fails in a triply connected domain, see \cite{DM}. Herglotz's Theorem has applications to de Branges-Rovnyak spaces (see the seminal work \cite{Sarason}) as well as to two-phase composite materials, where the scalar-valued Herglotz functions correspond to the effective properties of the composite materials and the matrix-valued Herglotz functions are applied to study the permeability tensor of a porous material, see \cite{OL}. 
	
	In spite of many applications of Herglotz's representation, it has never been applied to generalize a landmark result of Carathe\'odory who, in his study of holomorphic self maps on the open unit disc, proved the following theorem, see Section 284 in \cite{C}. 
	\begin{thm}[Carath\'eodory] \label{CR}
		Any holomorphic function $\varphi: \mathbb{D} \to \overline{\mathbb{D}}$ can be approximated (uniformly on compacta in $\mathbb{D}$) by rational inner functions, i.e., rational functions whose poles are outside $\overline{\mathbb D}$ and which assume unimodular values on the unit circle. \end{thm}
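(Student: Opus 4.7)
The plan is to deduce Carathéodory's theorem from Herglotz's representation \eqref{Her}---which is one direction of the equivalence announced in the abstract. First, the trivial case where $\varphi$ is a unimodular constant is handled directly (such a $\varphi$ is itself a rational inner function); otherwise the maximum modulus principle forces $|\varphi(z)| < 1$ throughout $\mathbb{D}$, and the Cayley transform $f = (1+\varphi)/(1-\varphi)$ belongs to $\mathfrak{H}(\mathbb{D})$ with inverse $\varphi = (f-1)/(f+1)$.

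Next I would invoke \eqref{Her} to write $f(z) = it + \int_{\mathbb{T}} \frac{\alpha+z}{\alpha-z}\, d\mu(\alpha)$ for some $t \in \mathbb{R}$ and finite positive Borel measure $\mu$ on $\mathbb{T}$, and then discretize $\mu$. Partitioning $\mathbb{T}$ into arcs $I_1^{(n)}, \dots, I_{N_n}^{(n)}$ of length at most $1/n$ and choosing $\alpha_k^{(n)} \in I_k^{(n)}$, I would form
\begin{equation*}
f_n(z) \;=\; it \;+\; \sum_{k=1}^{N_n} \mu\big(I_k^{(n)}\big)\, \frac{\alpha_k^{(n)} + z}{\alpha_k^{(n)} - z}.
\end{equation*}
The equicontinuity of the family $\{\alpha \mapsto (\alpha+z)/(\alpha-z) : z \in K\}$ on $\mathbb{T}$, for each compact $K \subset \mathbb{D}$, combined with the finiteness of $\mu$, should give $f_n \to f$ uniformly on compacta of $\mathbb{D}$.

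To close the loop I would set $\varphi_n = (f_n - 1)/(f_n + 1)$. Since $\operatorname{Re}\big((\alpha+z)/(\alpha-z)\big) = 0$ whenever $|\alpha| = |z| = 1$ and $\alpha \ne z$, one sees $\operatorname{Re} f_n = 0$ on $\mathbb{T} \setminus \{\alpha_1^{(n)}, \dots, \alpha_{N_n}^{(n)}\}$, forcing $|\varphi_n| = 1$ there, while the simple poles of $f_n$ at the $\alpha_k^{(n)}$ render $\varphi_n(\alpha_k^{(n)}) = 1$. Hence $\varphi_n$ is rational, unimodular on $\mathbb{T}$, with all poles outside $\overline{\mathbb{D}}$---i.e., rational inner. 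The bound $\operatorname{Re}(f_n + 1) \ge 1$ keeps $|f_n + 1|$ uniformly away from $0$ on compacta, so the uniform convergence of $f_n$ transfers to $\varphi_n \to \varphi$ uniformly on compacta. The main obstacle will be the bundle of verifications that $\varphi_n$ really is rational inner---pole location, non-vanishing of $f_n + 1$ in $\overline{\mathbb{D}}$, and unimodularity on $\mathbb{T}$---all of which rest on the partial-fractions-on-the-circle structure inherited from \eqref{Her}.
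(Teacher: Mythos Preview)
Your proof is correct and follows the same overall architecture as the paper's argument for the scalar case of Theorem~\ref{M-Cara}: pass to the Cayley transform, invoke the Herglotz representation \eqref{Her}, approximate the representing measure by finitely supported ones, and Cayley back to obtain rational inner approximants. The paper uses the opposite Cayley convention $\theta=(1-\varphi)(1+\varphi)^{-1}$, but this is immaterial.

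The one genuine methodological difference is in how the measure is discretized. The paper argues abstractly: it identifies the positive measure with a completely positive map on $C(\mathbb T)$, invokes Arveson's description of the extreme points of $CP(C(\mathbb T),M_N(\mathbb C))$, and applies the Krein--Milman theorem in the BW-topology to obtain approximants supported on finitely many points. Your Riemann-sum partition of $\mathbb T$ into small arcs achieves the same end by an entirely elementary route, exploiting the equicontinuity of $\alpha\mapsto(\alpha+z)/(\alpha-z)$ for $z$ in a fixed compact set. Your argument is shorter and more self-contained in the scalar setting; the paper's Krein--Milman approach, by contrast, is what allows the extension to matrix- and operator-valued functions in Theorems~\ref{M-Cara} and~\ref{O-Cara}, where no direct Riemann-sum discretization of a $\mathcal B(\mathcal H)$-valued measure is available.
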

	A proof, which seamlessly extends itself to matrix-valued functions, of this theorem can be done via Pick-Nevanlinna interpolation. In a recent work \cite{B_J_K}, the use of unitary dilation of contractions establishes the same result. Herglotz's representation for matrix-valued functions gives yet another proof. This new proof extends to operator-valued functions to give a version of Carath\'eodory's result. This is in Section \ref{C}. This section also shows that for scalar-valued functions, Herglotz's theorem is equivalent to Carath\'eodory's theorem in the disc.  
	
	Koranyi and Pukanszky noticed that the Szeg\"o kernel on $\bD$ given by 
	$$S_{\bD}(z,\zeta)=\frac{1}{1-z\overline{\zeta}}\ \text{for}\ z\in \bD \ \text{and}\ \zeta \in \bT$$ 
	can be used to rewrite Herglotz's representation \eqref{Her} as 
	$$f(z)= i\operatorname{Im} f(0)+ \int_{\mathbb{T}} \left(2S_{\mathbb{D}}(z,\zeta)-1 \right)d\mu(\zeta). $$
	Taking cue from here, they proved an integral representation for $\mathfrak{H}(\mathbb{D}^d)$ functions generalizing Herglotz's representation  to the polydisc in \cite{Kor-Puk}. It says that {\em a function $f \in \mathfrak{H}(\mathbb{D}^d)$ if and only if it is of the following form:}
	\begin{align*}\label{koranyi}
		f(\mathbf{z})= i\operatorname{Im} f(0)+ \int_{\mathbb{T}^d} \left(2S_{\mathbb{D}^d}(\mathbf{z},\mathbf{u})-1 \right)d\mu(\mathbf{u})
	\end{align*}
	where $S_{\mathbb{D}^d}$ is the Szeg\"o kernel on the polydisc and  $\mu$  is a non-negative measure satisfying
	\begin{equation} \label{DeterminingKP} \int_{\bT^d} u_1^{n_1} \cdots u_d^{n_d} d\mu(\mathbf u) =0
	\end{equation}
	unless $n_j \geq 0$ for all $j=1, \cdots, d$ or  $n_j \leq 0$ for all $j=1, \cdots, d$. 
	This integral representation is applicable mutatis mutandis for bounded homogeneous domains which are starlike, circular, and whose isotropy groups are linear and transitive on the Bergman-Shilov boundary.  Generalizations of these results go well beyond the classical Cartan domains where the Koranyi-Pukanszky formula is known. This is the content of \cite{Dautov}.
	
	The condition \eqref{DeterminingKP} hints that instead of the full dual of $C(\mathbb T^d)$, a quotient is found useful. Indeed, this is a new feature that arises as one moves away from the unit disc. Definition \ref{deter} and the results following it bring out the full clarity. 
	
	When one replaces the disc by a multi-connected domain in the plane with the aim to obtain a representation in the style of Herglotz, but having a suitable kernel in the manner of the Koranyi and Pukanszky, a natural challenge presents itself. A crucial step in the proof of Herglotz's Theorem in the disc is a scaling technique. This produces a class of approximating functions which gives rise to a class of measures. One then applies Helly's selection principle. In a multi-connected domain, the scaling technique fails. But we prove a generalization of the classical theorem of Carath\'eodory mentioned above. This is a new result and that comes to the rescue. So,  there is a reversal of roles. This pleasantly surprising contribution of the generalized theorem of Carath\'eodory to prove the generalization of Herglotz's Theorem for multi-connected domains is done in Section \ref{multi}. In this case too, Carath\'eodory's approximation is equivalent to a version of Herglotz's representation proved earlier in \cite{AHR}.

The measure given by Herglotz's theorem (for scalar-valued functions) in the unit disc is unique. In general, the uniqueness is lost. The question of whether the measure obtained from Herglotz's theorem is unique is related to the question of whether Herglotz's representation can be obtained for operator valued functions. In Section \ref{Opvalued}, we show that the uniqueness of the measure in Herglotz's representation of scalar-valued functions always leads to Herglotz's representation for operator-valued functions. 

 The standard technique of extending Herglotz's representation from scalar-valued functions to operator-valued functions requires the uniqeness of the representing measure in Herglotz's representation for a scalar-valued function. In spite of the lack of uniqueness in a multi-connected domain, a version of Herglotz's representation for matrix valued functions has been proven in \cite{BH}. Leveraging the ideas from there, we prove Carath\'eodory's approximation theorem for a holomorphic function which takes values in $N\times N$ matrices and has norm no larger than $1$. Such functions are approximated uniformly on compact subsets of the multi-connected domain $\Omega$ by {\em inner functions}, i.e., functions which are holomorphic, take values in $N\times N$ matrices and are unitary-valued almost everywhere on the boundary $\partial \Omega$.

	\section{Carath\'eodory's theorem for operator-valued functions on the disc} \label{C}
	
	\subsection{Herglotz's representation implies Carath\'eodory's approximation}

Proofs of Carath\'eodory's theorem for matrix-valued functions are available from various perspectives, including the function-theoretic \cite{C, Rudin-polydisc} and operator-theoretic \cite{B_J_K} viewpoints. Here we demonstrate a new proof of Carathéodory's theorem for matrix-valued functions on $\bD$, from the perspective of operator algebras which gives a version of Carathéodory's theorem for operator-valued functions, hitherto unknown.
	
		
		
	
	\begin{thm}\label{M-Cara}
		A holomorphic map $f:\bD\rightarrow M_N(\bC)$ satisfying $\|f(z)\|\leq 1$ for all $z\in\bD$ can be approximated uniformly on compacta in $\bD$ by  a sequence of $M_N(\bC)$-valued rational inner functions i.e., rational functions  with poles off $\overline{\mathbb D}$ whose boundary values (on the unit circle) are unitary matrices.
	\end{thm}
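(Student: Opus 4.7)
The plan is to translate the Schur-class problem to the Herglotz class via a Cayley transform, invoke the matrix-valued Herglotz representation, approximate the resulting operator-valued measure by atomic ones, and then invert the Cayley transform to produce the advertised rational inner approximants.

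First, I would replace $f$ by $rf$ for $r \uparrow 1$; this approximates $f$ uniformly on compacta of $\bD$ and guarantees $\|rf(z)\| < 1$, so $I - rf(z)$ is invertible throughout $\bD$ and
\begin{equation*}
g(z) = (I + rf(z))(I - rf(z))^{-1}
\end{equation*}
defines an element of $\mathfrak{H}(\bD, M_N(\bC))$. The matrix-valued Herglotz representation then gives
\begin{equation*}
g(z) = iC + \int_{\bT} \frac{\alpha + z}{\alpha - z}\, dM(\alpha)
\end{equation*}
for some self-adjoint $C \in M_N(\bC)$ and a positive $M_N(\bC)$-valued Borel measure $M$ on $\bT$. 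This is precisely where the existence of a Herglotz representation enters.

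Next I would approximate $M$ by atomic measures. Partition $\bT$ into arcs $I_k^{(n)}$ of shrinking diameter, pick sample points $\alpha_k^{(n)} \in I_k^{(n)}$, and form the positive matrices $A_k^{(n)} = M(I_k^{(n)})$. The rational Herglotz functions
\begin{equation*}
g^{(n)}(z) = iC + \sum_k \frac{\alpha_k^{(n)} + z}{\alpha_k^{(n)} - z}\, A_k^{(n)}
\end{equation*}
satisfy $g^{(n)} \to g$ pointwise in $\bD$, since for each fixed $z \in \bD$ the kernel $\alpha \mapsto (\alpha+z)/(\alpha-z)$ is continuous on $\bT$; the family being locally uniformly bounded, normality upgrades this to convergence uniformly on compacta. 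The approximants are then the inverse Cayley transforms $f^{(n)}(z) = (g^{(n)}(z) - I)(g^{(n)}(z) + I)^{-1}$. On any arc of $\bT$ away from the pole set $\{\alpha_k^{(n)}\}$ the boundary values of $g^{(n)}$ are skew-Hermitian, so $f^{(n)}$ takes unitary values there, and a Schwarz-type reflection identity inherited from these unitary boundary values forces every finite pole of $f^{(n)}$ to lie outside $\overline{\bD}$.

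I expect the main obstacle to be the behavior of $f^{(n)}$ at the poles of $g^{(n)}$ on $\bT$, where the residue matrix $A_k^{(n)}$ need not be invertible: $g^{(n)}$ blows up on $\mathrm{range}(A_k^{(n)})$ while remaining bounded on $\ker(A_k^{(n)})$. A block decomposition adapted to this range/kernel splitting is needed to confirm that these singularities are removable for $f^{(n)}$ and that $f^{(n)}$ is genuinely a rational inner function in $M_N(\bC)$. Once this is verified, the uniform continuity of the inverse Cayley transform on compact sets avoiding $-1$ in the spectrum yields $f^{(n)} \to f$ uniformly on compacta, completing the approximation.
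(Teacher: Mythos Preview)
Your proposal is correct and follows the same overall architecture as the paper --- Cayley transform into the Herglotz class, matrix-valued Herglotz representation, approximate the measure atomically, Cayley back --- but the implementation differs in two places.  First, where you approximate the positive $M_N(\bC)$-valued measure by Riemann sums over a shrinking partition of $\bT$, the paper instead identifies the measure with a completely positive map $C(\bT)\to M_N(\bC)$, invokes Arveson's description of the extreme points of this CP-map simplex, and applies Krein--Milman in the BW topology.  Both routes land on approximants of the same shape $iC+\sum_k\frac{\alpha_k+z}{\alpha_k-z}A_k$; yours is more elementary and self-contained, while the paper's route is the advertised ``operator-algebraic'' perspective and is what generalises to the $\cB(\cH)$-valued Theorem~\ref{O-Cara}.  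Second, your scaling $f\mapsto rf$ disposes of the case $\|f(z)\|\equiv 1$ in one line, whereas the paper treats it separately via a unitary block decomposition and induction on $N$.

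The obstacle you flag --- removability of the boundary singularities of $f^{(n)}$ when some $A_k^{(n)}$ is singular --- is genuine but resolves cleanly without a block analysis: $f^{(n)}$ is a priori a rational matrix function (quotient of polynomial matrices after clearing the denominator $\prod_k(\alpha_k-z)$), and on $\bD$ it satisfies $\|f^{(n)}(z)\|\le 1$ because $\operatorname{Re}g^{(n)}\ge 0$ there.  A rational function with a pole at $\alpha_k\in\bT$ would be unbounded on the portion of any neighbourhood of $\alpha_k$ lying in $\bD$, contradicting this bound; hence every such singularity is removable, $f^{(n)}$ is holomorphic on $\overline{\bD}$, and continuity extends the unitary boundary values across the finite exceptional set.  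The paper simply asserts that the inverse Cayley transforms are rational inner without pausing here, so your caution is well placed.
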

	
	\begin{proof}

		By a straightforward application of the maximum norm principle, either $\|f(z)\|<1$ for all $z \in\mathbb{D}$, or $\|f(z)\|= 1$ for all $z\in\mathbb{D}$. 

In the case when $\|f(z)\|<1$ for all $z \in\mathbb{D}$, define a holomorphic map $\theta: \bD\rightarrow M_N(\bC)$  by 
		\begin{align*}
			\theta(z)= \left(I-f(z)\right)\left(I+f(z)\right)^{-1}.
		\end{align*}
		Since the real part of $\theta$ is non-negative, by Herglotz's Theorem (for matrix-valued functions), there exists a positive $M_N(\bC)$-valued bounded regular Borel measure $E$ on $\mathbb{T}$ with $E(\bT)= \operatorname{Re}\theta(0)$ such that
		\begin{align}\label{theta}
			\theta(z)= i\operatorname{Im}\theta(0) + \int_{\mathbb{T}} \frac{\alpha+z}{\alpha-z}dE(\alpha).
\end{align}
		See \cite{GKMT}. This measure induces a (completely) positive map $\Theta_{E}: C(\bT)\rightarrow M_N(\bC)$:
		$$ \Theta_{E}(h)= \int_{\mathbb{T}} h dE, \ \text{for}\ f\in C(\mathbb{T}).$$

		There is a one to one correspondence between the set of all positive $M_N(\bC)$-valued bounded regular Borel measures $\Lambda$ on $\mathbb{T}$ with $\Lambda(\bT)= \operatorname{Re}\theta(0)$ and the set of (completely) positive maps $\Phi$ from $ C(\bT)$ to $M_N(\bC)$, with $\Phi(1)= \operatorname{Re}\theta(0)$, denoted by $CP\left(C(\mathbb{T}), M_N(\mathbb{C}) \right)$. It is compact in the bounded weak (BW) topology, see \cite{Arveson1} for details on this topology. The extreme points of $CP\left(C(\mathbb{T}), M_N(\bC) \right)$ are the positive maps of the form
		$$\Phi (h)= h(\alpha_1)A_1 + \cdots + h(\alpha_n)A_n, \ h\in C(\mathbb{T}), $$
		for some $\alpha_1, \cdots, \alpha_n \in \mathbb{T}$ and positive $N\times N$ matrices $A_1,\cdots, A_n$ satisfying
		\begin{enumerate}
			\item[(i)] $A_1+\cdots + A_n = \operatorname{Re}\theta(0)$ and
			\item[(ii)] $\left\lbrace [A_1(\mathbb{C}^N), \cdots, A_n(\mathbb{C}^N)]  \right\rbrace$ is a weakly independent family of subspaces.
		\end{enumerate}
		See Theorem 1.4.10 in \cite{Arveson1}. An application of the Krein-Milman theorem enables us to conclude that $CP\left(C(\mathbb{T}), M_N(\bC) \right)$ is the BW-closure of the convex hull of the extreme points. It is also known that the BW-topology on $CP\left(C(\mathbb{T}), M_N(\mathbb{C}) \right)$ is metrizable and hence is sequentially BW-compact.We shall apply this below.

		Get a sequence $\{\beta_n\}$ in the convex hull of the extreme points such that $\beta_n (h) \rightarrow \Theta_E(h)$ in operator norm (the function is matrix-valued), for every $h\in C(\bT)$. Given a compact subset $K$ of $\mathbb{D}$, for each $z\in K$, the function 
		$$h(\alpha)= \frac{\alpha + z}{\alpha - z},$$
		is continuous on $\mathbb{T}$. Note that 
		$$\theta_n(z):= i\operatorname{Im}\theta(0) + \beta_n(h)  \rightarrow  \theta(z) \ \text{in operator norm}.$$
It is known that
		$$ \beta_n(g) = \sum_{k=1}^ {l(n)} \sum_{j=1}^{r(k)} \lambda_k^{(n)}  g(\alpha_j^{(k)}) A_j^{(k)}\text{ for } g \in C(\mathbb T),$$
		for certain points $\alpha_j^{(k)}$ in $\mathbb{T}$, positive scalars $\lambda_k^{(n)}$, satisfying $\sum_{k=1}^{l(n)} \lambda_k^{(n)} =1$ and  $N\times N$ positive matrices $ A_j^{(k)}$ with $\sum_{j=1}^{r(k)} A_j^{(k)}= \operatorname{Re}\theta(0)$ for each $k$.  Thus, 
		$$\theta_n(z) = i\operatorname{Im}\theta(0) + \sum_{k=1}^ {l(n)} \sum_{j=1}^{r(k)} \lambda_k^{(n)}  \frac{\alpha_j^{(k)} +z}{\alpha_j^{(k)} -z} A_j^{(k)}.$$
		This structure immediately implies that $\theta_n$ is a rational Herglotz class function with the additional property of $\operatorname{Re}\theta_n(\zeta)=0$ almost everywhere on the unit circle $\mathbb{T}$. Considering the inverse Cayley transform 
		$$
		f_n(z)=(I-\theta_n(z))(I+\theta_n(z))^{-1}
		$$
		of $\theta_n$ we observe that $\{f_n\}_n$ is a sequence of rational inner functions. An application of Montel's theorem then completes the proof.
		
		On the other hand if $\|f(z)\|=1$ for every $z\in \bD$, by Theorem $4$ in \cite{Con}, we can find two $N\times N$ unitary matrices $U_1$ and $U_2$ such that 
		\begin{align*} 
			f(z)= U_1
			\begin{bmatrix}
				1 & 0\\ 0 & g(z)
			\end{bmatrix}
			U_2 \ \text{for}\ z\in \bD
		\end{align*}
		for some $M_{N-1}(\bC)$-valued holomorphic function $g$ on $\bD$ such that $\|g(z)\| \leq 1$ for every $z\in \bD$. If $\|g(z)\|<1$ for each $z$ then we approximate $g$ by a sequence $\{g_n\}$ of $M_{N-1}(\bC)$-valued rational inner functions and hence the rational inner functions  
		$$f_n = U_1 \begin{bmatrix} 1 & 0 \\ 0 & g_n \end{bmatrix} U_2 $$ will converge to $f$ uniformly on compacta in $\bD$. Otherwise, we repeat the process for $g$. This process stops in finitely many steps.
	\end{proof}
	
	The proof above adapts itself to some extent in the infinite-dimensional case.
	
	\begin{thm}\label{O-Cara}
		A holomorphic map $f : \mathbb{D} \rightarrow \cB(\cH)$ satisfying $\operatorname{Re} f(z)> 0$ for all $z \in \mathbb{D}$ can be approximated (uniformly on compacta in $\mathbb{D}$) in the weak operator topology (WOT) by a net $\{f_\lambda \}$ of Herglotz class functions which also satisfy $\operatorname{Re}f_\lambda(\zeta)=0$ almost everywhere on the unit circle $\mathbb{T}$.
	\end{thm}
	\begin{proof}
		After getting a positive $\cB(\cH)$-valued bounded regular Borel measure $E$ on $\mathbb{T}$ as in \eqref{theta}, an application of Theorem 6.15 of \cite{Banerjee-Bhat-Kumar} (actually a slight variant of this theorem) produces a net $\{E_{\lambda}\}$ of $C^*$-convex combinations of Dirac spectral measures  such that
		\begin{align*}
			E_{\lambda}\xrightarrow{BW-topology} E \quad \text{ (as net) }.
		\end{align*}
		Each $E_{\lambda}(\cdot)$ is of the form
		$\sum_{j=1}^{N(\lambda)}T_j^*\delta_{\alpha_j}(\cdot)T_j$ with $\sum T_j^*T_j= \operatorname{Re}f(0)$. Define
		$$f_\lambda(z)=i \operatorname{Im}f(0)+ \int_{\mathbb{T}} \frac{\alpha+z}{\alpha-z}dE_\lambda(\alpha) .$$
		For each $z\in\mathbb{D}$, the map $\alpha\mapsto \frac{\alpha+z}{\alpha-z}$ is continuous on $\mathbb{T}$. Therefore, 
		\begin{align*}
			\int_{\mathbb{T}} \frac{\alpha+z}{\alpha-z}dE_\lambda(\alpha)&= \int_{\mathbb{T}} \frac{\alpha+z}{\alpha-z}\sum_{j=1}^{N(\lambda)}T_j^*\delta_{\alpha_j}(\cdot)T_j= \sum_{j=1}^{N(\lambda)}\frac{\alpha_j+z}{\alpha_j-z}T_j^*T_j.
		\end{align*}
		Fix, $h,k \in \cH$. For each $\lambda$ we define, $$\psi_\lambda^{h,k} (z) = \langle f_\lambda(z) h, k \rangle \ \text{for}\ z\in \bD.$$
		 Clearly, $\psi_\lambda^{h,k} $ is holomorphic in $\bD$. Recall from the previous theorem that for $E_\lambda$ we have a completely positive map $\Theta_{E_\lambda}$ on $C(\mathbb{T})$ with norm $\| \Theta_{E_\lambda}(1)\|= \|\operatorname{Re}f(0) \|$ such that
		 $$\Theta_{E_\lambda} \left( \frac{\alpha+z}{\alpha-z} \right)= 	\int_{\mathbb{T}} \frac{\alpha+z}{\alpha-z}dE_\lambda(\alpha) $$ for every $z\in \bD$.
		 Now, for $0<r<1$ consider $z\in \bD_r=\{w: |w|\le r\}$. Then
		 \begin{align*}
		 |\psi_\lambda^{h,k}(z)| & \le \|\operatorname{Im}f(0)\| \|h\| \|k\|+ |\langle \Theta_{E_\lambda} \left( \frac{\alpha+z}{\alpha-z} \right)h, k  \rangle| \\
		 & \le \left( \|\operatorname{Im}f(0)\|  + \frac{1+r}{1-r}   \|\operatorname{Re}f(0) \| \right) \|h\| \|k\|.
		 \end{align*}
		This implies that $\{\psi_\lambda^{h,k}\}_\lambda$ is uniformly bounded on $\bD_r$ for each $r$. Hence $\{\psi_\lambda^{h,k}\}_\lambda$ is uniformly bounded on compact subsets of $\bD$.
		
		Using this information along with the Cauchy integral formula, one can prove that the family $\{\psi_\lambda^{h,k}\}_\lambda$ is equicontinuous on compact subsets of $\bD$. We also have the pointwise convergence of $\{\psi_\lambda^{h,k}\}_\lambda$, i.e., 
		$$ \psi_\lambda^{h,k}(z) \xrightarrow{\text{as net}} \langle f(z)h, k \rangle $$ 
for every $z\in \bD$. An $\epsilon/3$-argument using uniform continuity of $z \mapsto \langle f(z)h, k \rangle $ on any compact subsets of $\bD$ now shows that
		$$\psi_\lambda^{h,k}(z) \xrightarrow{\text{as a net}} \langle f(z)h, k \rangle \ \text{uniformly on compacta in }\ \bD.$$
		The functions $\{f_{\lambda}\}$ are in the Herglotz class because 
		\begin{align*}
			\operatorname{Re} f_{\lambda}(z)&=\sum_{j=1}^{N(\lambda)}\operatorname{Re}\left[\frac{\alpha_j+z}{\alpha_j-z}T_j^*T_j\right]=\sum_{j=1}^{N(\lambda)}\operatorname{Re}\left[\frac{\alpha_j+z}{\alpha_j-z}\right]T_j^*T_j
		\end{align*}
		which is non-negative for $z\in \bD$. Also, it is zero on $\bT$ except for finitely many points. Hence the net $\{f_\lambda\}$ is the desired net of functions.
	\end{proof}
\subsection{Carath\'eodory's approximation implies Herglotz's representation}
We close this section by showing that in the case of scalar-valued functions, the Herglotz's representation can be proved using Carath\'eodory's approximation theorem.

So, we start with  a holomorphic map $f : \mathbb{D} \rightarrow \mathbb{C}$ such that $\operatorname{Re} f(z)\geq 0$ for all $z \in \mathbb{D}$.  If $\operatorname{Re}f(0)=0$ then by the open mapping theorem $f$ must be the constant function $i \operatorname{Im}f(0)$. Thus, assume $\operatorname{Re}f(0) > 0$. Consider the function 
	$$ g(z)= \frac{1}{\operatorname{Re}f(0)} (f(z)-i \operatorname{Im}f(0)). $$ 
Then, $g(0) =1$ and the real part of $g$ is non-negative. So, without loss of generality assume that $f(0)=1$.
Then the Cayley transform $\theta= \left(1-f\right)\left(1+f\right)^{-1}$ of $f$ is holomorphic in $\bD$ and $|\theta(z)|\leq 1$ for all $z\in\mathbb{D}$. Also, $\theta(0)=0$. Invoke Carath\'eodory's Theorem \ref{CR} to get a sequence of finite Blaschke products $\{B_n\}$ with $B_n(0)=0$ (see \cite[Theorem 2.1, Chapter I]{Garnett}) such that $B_n\rightarrow \theta$ uniformly on compacta in $\bD$. Let $\psi_n$ denote the inverse Cayley transform of the finite Blaschke product $B_n$. It is known (\cite[Lemma 5.1]{DU}) that $\psi_n$ is of the form 
$$
\sum_{j=1}^{r(n)} \lambda_j \frac{\alpha_j + z}{\alpha_j - z} = \int_\bT \frac{\alpha + z}{\alpha - z} d\mu_n (\alpha)
$$
where the measure $\mu_n$ is of the form: $\mu_n =\sum_{j=1}^{r(n)} \lambda_j \delta_{\alpha_j} $ for finitely many points $\alpha_j$ in $\bT$ and non-negative scalars $\lambda_j$ with $\sum_{j=1}^{r(n)} \lambda_j =1 $. Since $\{\mu_n\}_{n\geq 1}$ is a sequence of regular Borel probability measures on $\bT$, we can extract a subsequence $\{\mu_{n_k}\}$ such that $\mu_{n_k} \xrightarrow{weak^*} \mu$  for some regular Borel probability measure $\mu$. Therefore 
\begin{align*}
	\psi_{n_k} (z)= \int_\bT \frac{\alpha + z}{\alpha - z} d\mu_{n_k} (\alpha) \rightarrow \int_\bT \frac{\alpha + z}{\alpha - z} d\mu (\alpha).
\end{align*}
But $\psi_{n_k}(z)$ converges to $f(z)$ for each $z$. Hence we have 
$$
f(z)= \int_\bT \frac{\alpha + z}{\alpha - z} d\mu (\alpha).
$$ 
This is Herglotz's representation for $f$.

	\section{Carath\'eodory and Herglotz in multi-connected domains} \label{multi}
	
Corresponding to a bounded multi-connected planar domain $\Omega$ with boundary consisting of $(m+1)$ many disjoint simple analytic closed curves $\partial_0, \dots, \partial_m$, an $m+1$-dimensional torus $T_\Omega$ defined by 
	$$T_{\Omega}=\partial_0\times \dots\times \partial_m$$
will play a crucial role. Let $z_0$ be a point in $\Omega$. One of the ways Herglotz's theorem is proved in the disc case is through a convexity argument by first showing that $z \mapsto \frac{\alpha +z }{\alpha -z}, \alpha \in \bT$ are the extreme points of $\mathfrak{H}_0(\bD)$ and then applying a Krein-Milman type theorem. The extreme points of $\mathfrak{H}_{z_0}(\Omega)$ have been identified in \cite{Forelli}. It was natural that this would lead to a certain form of Herglotz's theorem and did in \cite{AHR}. Our aim here is to prove a Koranyi-Pukanszky type theorem. To that end, we need to recall the family $\{\Gamma_\alpha : \alpha \in T_\Omega\}$ of extreme points of $\mathfrak{H}_{z_0}(\Omega)$. 

These functions have been studied in \cite{Fisher-Khavinson, Forelli, Grunsky, Heins}. We shall recall from \cite{AHR}. For a fixed $z_0 \in \Omega$, consider the harmonic measure $\omega_{z_0}$ on $\partial \Omega$ for $z_0$. To elaborate, any $f \in C_{\mathbb{R}}(\partial \Omega)$ has a unique harmonic extension $u_f$ in $\Omega$. Consider the bounded linear functional $f \mapsto u_f(z_0)$ on $C_{\mathbb{R}}(\partial \Omega)$. The measure $\omega_{z_0}$ is the dual element which implements this linear functional. Let $P_z(\lambda)$ be the Poisson kernel which produces $u_f$ from $f$. In a multi-connected domain, a harmonic function need not have a single-valued harmonic conjugate in general. However, there is an $(m+1)$-tuple of positive weights $(w_0(\alpha),\dots, w_m(\alpha))$ indexed by $\alpha \in T_\Omega$ and adding upto $1$ such that 
$$
w_0(\alpha)P_z(\alpha_0)+ w_1(\alpha)P_z(\alpha_1)+\dots + w_m(\alpha)P_z(\alpha_m)
$$
has a single-valued harmonic conjugate. An extreme point $\Gamma_\alpha$ is the unique holomorphic functions on $\Omega$ whose real part is the function above and whose imaginary part vanishes at $z_0$. It is straightforward that $\Gamma_\alpha(z_0)=1$ for all $\alpha \in T_\Omega$ and 
$\operatorname{Re}\Gamma_\alpha (z)$ is positive for $z\in \Omega$ and for all $\alpha \in T_\Omega$.

Recall from the proof of Herglotz's representation in the disc case that a scaling is required. Since scaling is not available in a multi-connected domain, we need to find a different technique. This is what we develop first.


	\subsection{Carath\'eodory's theorem in multi-connected domains} \label{Cara}
 A complex-valued bounded holomorphic function $f$ on $\Omega$ is said to be \textit{inner} if the boundary values (which exist almost everywhere in $\partial \Omega$) of $f$ are unimodular $\omega_{z_0}$-almost everywhere on $\partial \Omega$. See \cite{BFKS} for a related but different notion of inner functions.

 We are interested in a very specific type of inner functions in $\cA(\Omega)$, viz., those belonging to the following class of functions
 \begin{align}\label{Inner-Mul}
 \mathscr{RI}(\Omega)=\left\{\frac{1-\theta}{1+\theta}: \theta= ia+ b \sum_{j=1}^{n} c_j \Gamma_{\alpha_j}\text { where } b >0, a\in\mathbb{R}, n\in \mathbb{N} \text { and } \sum_{j=1}^{n} c_j =1 \text{ with } c_j\geq 0  \right\}.	
 \end{align}
Every member of $\mathscr{RI}(\Omega)$ is an inner function. Indeed, a $\theta$ of the form
$$
\theta(z) = ia + b \sum_{j=1}^{n} c_j \Gamma_{\alpha_j}(z)
$$
for some points $\alpha_j \in T_{\Omega}$, $a \in \mathbb{R}$, $b > 0$, $n\in \bN$ and $\sum_{j=1}^{n} c_j =1 \text{ with } c_j\geq 0$ extends meromorphically in a neighbourhood of $\overline{\Omega}$ with finitely many poles on $\partial \Omega$ since each $\Gamma_{\alpha_j}$ does so. Also, $\operatorname{Re}\theta(z) >0$ for each $z\in \Omega$ since so is $\operatorname{Re}\Gamma_{\alpha_j}(z)$ for each $j$. Furthermore, if $\zeta \in \partial \Omega$ is not a pole of $\theta$, then we have
	$$\operatorname{Re}\theta (\zeta)=  b \sum_{j=1}^{n} c_j \operatorname{Re}\Gamma_{\alpha_j}(\zeta)=0.$$
Thus $\frac{1-\theta}{1+\theta}$ is an inner function on $\Omega$ and it has removable singularities at the poles of $\theta$. Therefore $\frac{1-\theta}{1+\theta}$ is in $\cA(\Omega)$. In fact, $\mathscr{RI}(\bD)$ consists of rational inner functions. Our generalization of Carath\'eodory's result is as follows.	
\begin{thm} \label{CR-M}
		Any holomorphic function $\varphi:\Omega \to \bC$ satisfying $|\varphi(z)|\leq 1$ for all $z \in \Omega$ can be approximated (uniformly on compacta in $\Omega$) by inner functions in $ \mathscr{RI} (\Omega)$.
	\end{thm}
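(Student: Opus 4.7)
The plan is to adapt the proof of Theorem~\ref{M-Cara} by substituting, for the classical Herglotz representation on $\bD$, the Herglotz-type representation for $\mathfrak{H}_{z_0}(\Omega)$ established in \cite{AHR}, whose extremal family is exactly $\{\Gamma_\alpha : \alpha \in T_\Omega\}$; and by replacing the Krein-Milman step on completely positive maps in Theorem~\ref{M-Cara} by the Krein-Milman step on probability measures on $T_\Omega$. By the maximum modulus principle, either $|\varphi(z)|<1$ for every $z\in\Omega$ or $\varphi$ is a unimodular constant; in the latter case $\tau\varphi\to\varphi$ uniformly on $\Omega$ as $\tau\to 1^-$ with each $\tau\varphi$ in the strict case, so a standard diagonal extraction reduces matters to the case $|\varphi(z)|<1$ throughout $\Omega$.

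Under this reduction, I would form $\theta(z)=(1-\varphi(z))/(1+\varphi(z))$, which is holomorphic on $\Omega$ with $\operatorname{Re}\theta>0$. Applying the representation of \cite{AHR} produces a positive regular Borel measure $\mu$ on $T_\Omega$ of total mass $\operatorname{Re}\theta(z_0)$ such that
\begin{equation*}
\theta(z)=i\operatorname{Im}\theta(z_0)+\int_{T_\Omega}\Gamma_\alpha(z)\,d\mu(\alpha),\qquad z\in\Omega.
\end{equation*}
Normalising, $\widetilde\mu:=\mu/\operatorname{Re}\theta(z_0)$ is a probability measure on the compact metric space $T_\Omega$.

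The extreme points of the weak-$*$ compact, convex, metrizable set of probability measures on $T_\Omega$ are the Dirac masses, so the Krein-Milman theorem provides a sequence $\widetilde\mu_n=\sum_{j=1}^{N_n}c_j^{(n)}\delta_{\alpha_j^{(n)}}$, with $c_j^{(n)}\geq 0$ and $\sum_j c_j^{(n)}=1$, converging to $\widetilde\mu$ in the weak-$*$ topology. Setting
\begin{equation*}
\theta_n(z)=i\operatorname{Im}\theta(z_0)+\operatorname{Re}\theta(z_0)\sum_{j=1}^{N_n}c_j^{(n)}\Gamma_{\alpha_j^{(n)}}(z),
\end{equation*}
one reads off from \eqref{Inner-Mul} (with $a=\operatorname{Im}\theta(z_0)\in\mathbb{R}$ and $b=\operatorname{Re}\theta(z_0)>0$) that $\varphi_n:=(1-\theta_n)/(1+\theta_n)$ lies in $\mathscr{RI}(\Omega)$.

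For each fixed $z\in\Omega$, weak-$*$ convergence combined with the continuity of the map $\alpha\mapsto\Gamma_\alpha(z)$ on $T_\Omega$ gives $\theta_n(z)\to\theta(z)$. Since each $\theta_n\in\mathfrak{H}(\Omega)$ with $\operatorname{Re}\theta_n(z_0)=\operatorname{Re}\theta(z_0)$, Harnack's inequality bounds $\{\theta_n\}$ uniformly on every compactum in $\Omega$; Montel's theorem then promotes pointwise convergence to locally uniform convergence, and because $\operatorname{Re}(1+\theta_n)\geq 1$, the inverse Cayley transform preserves local uniform convergence, whence $\varphi_n\to\varphi$ uniformly on compacta. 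The principal obstacle is establishing the continuity of $\alpha\mapsto\Gamma_\alpha(z)$, which must be extracted from Forelli's description of the extreme points in \cite{Forelli}; once that point is in hand, the rest is routine normal-family machinery.
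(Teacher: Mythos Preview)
Your proposal is correct and follows essentially the same route as the paper's proof: Cayley transform to a Herglotz function, apply the representation from \cite{AHR} over $T_\Omega$, approximate the measure by finitely supported ones via Krein--Milman, and pass back through the inverse Cayley transform. The paper supplies the continuity of $\alpha\mapsto\Gamma_\alpha(z)$ by citing Lemma~1.2.20 of \cite{AHR} (invoked explicitly in Section~\ref{multi}.3), so you need not dig it out of \cite{Forelli}; and the paper upgrades pointwise to locally uniform convergence by bounding $\|\varphi_l\|\le 1$ directly and invoking normal families, rather than going through Harnack on the $\theta_n$---but these are cosmetic differences, and your extra care in first disposing of the unimodular-constant case (where $1+\varphi$ may vanish) is, if anything, a small improvement.
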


	\begin{proof}
	 Let $\varphi$ be as in the statement of the theorem.  Define a holomorphic map $\theta: \Omega\rightarrow \bC$ as follows
	\begin{align*}
		\theta(z)= \left(1-\varphi(z)\right)\left(1+\varphi(z)\right)^{-1}.
	\end{align*}
	Clearly, the real part of $\theta$ is non-negative. We shall show that the function $\theta$ can be approximated pointwise in $\Omega$ by Herglotz class functions $\theta_l$ satisfying the additional property of $\operatorname{Re}\theta_l(\zeta)=0$ for almost every $\zeta \in \partial \Omega$. Note that if we prove this, then for each $l\geq 1$ the functions $\varphi_l$ defined as
	$$\varphi_l(z)= \left(1-\theta_l(z)\right)\left(1+\theta_l(z)\right)^{-1}$$
	is an inner function and a simple calculation shows that the sequence $\{\varphi_l\}_{l\geq 1}$ converges to $\varphi$ uniformly on each compact subset of $\Omega$. By Theorem 1.1.21 in \cite{AHR}, which is Herglotz's representation obtained by the convexity argument mentioned above, there exists a finite positive regular Borel measure $\mu$ on $T_{\Omega}$ with total mass $\operatorname{Re}\theta(z_0)$ such that
	\begin{align*}
		\theta(z)=i\operatorname{Im} \theta(z_0)+ \int_{T_\Omega} \Gamma_\alpha(z)d\mu(\alpha).
	\end{align*}
	Since the Dirac measures at each point of $T_\Omega$ are the extreme points of the collection of regular Borel probability measures on $T_\Omega$, an application of Krein-Milman theorem enables us to get a sequence $\{\mu_l\}$ of probability measures such that 
	$$\mu_{l}\xrightarrow{weak*} \frac{1}{\operatorname{Re}\theta (z_0)} \mu, $$
	where $\mu_{l}$ is of the form $\sum_{j=1}^{n_l} c_j \delta_{\alpha_j}$ with $\alpha_j\in T_{\Omega}$, $c_j\geq 0$ and $\sum c_j=1$.
	Therefore, for each fixed $z\in \Omega$, 
	\begin{align*}
		\int_{T_{\Omega}} \Gamma_\alpha (z) d\mu_l \rightarrow \frac{1}{\operatorname{Re}\theta(z_0)}\int_{T_{\Omega}} \Gamma_\alpha (z) d\mu,
	\end{align*}
	and hence
	\begin{align*}
		\theta_l(z):= i\operatorname{Im}\theta(z_0)+ \int_{T_{\Omega}} \Gamma_\alpha (z) \operatorname{Re}\theta(z_0) d\mu_l \rightarrow \theta(z).
	\end{align*}
	Note that, $\theta_l$ is of the form 
	$$
	\theta_l(z)= i\operatorname{Im}\theta(z_0)+ \operatorname{Re}\theta(z_0) \sum_{j=1}^{n_l} c_j \Gamma_{\alpha_j}(z).
	$$
 So, $\varphi_l$ is in $\mathscr{RI}(\Omega)$. Also, $\|\varphi_l\|=1$ for each $l$ and $\varphi_l$ converges to $\varphi$ pointwise in $\Omega$. Hence $\varphi_l$ converges to $\varphi$ uniformly on each compact subset of $\Omega$.
\end{proof}
	
	\subsection{Properties of the Szeg\"o Kernel} This brief subsection will define the Szeg\"o kernel required for proving a Koranyi-Pukaszky formula. Certain properties of it are required which will be deduced below.
	
Let $\mu$ denote the usual normalized arc-length measure of the boundary of $\Omega$. Consider the Banach algebra $\cA(\Omega)$ of continuous functions on $\overline{\Omega}$ which are holomorphic in $\Omega$. Since $\cA(\Omega)$ is contained in $L^2(\mu)$, define the Hardy space of $\Omega$ as 
$$
H^2(\Omega, \mu)= \overline{\cA(\Omega)}^{L^2(\mu)}. 
$$
It is known that this a reproducing kernel Hilbert space. See \cite{M-S} where the reproducing kernel for the annulus is written. A Koranyi-Pukanszky formula for a Herglotz function using this kernel is not known. We need a {\em better} kernel.

 The {\em Hardy space} $H^2(\Omega, \omega_{z_0})$ is the closure of $\cA(\Omega)$ in $L^2(\omega_{z_0})$. It is interesting to note that as vector spaces $H^2(\Omega, \mu)$ and $H^2(\Omega, \omega_{z_0})$ are same. But they are different as reproducing kernel Hilbert spaces. For more details, see \cite{Abrahamse} as well as \cite{Ball-Clancey} where an explicit description of the reproducing kernel is given. Let $S_\Omega$ be the reproducing kernel for $H^2(\Omega, \omega_{z_0})$. The Szeg\"o kernel has the following properties:
 
	\begin{enumerate}
			\item[(i)] $S_\Omega (w,z_0)=1$ for every $w\in \Omega$.
			\item[(ii)] For each $z\in \Omega$, the function $s_z$ is in $\cA(\Omega)$, where $s_z (w)= S_\Omega (w,z)$ for $w\in \Omega$.
		\end{enumerate} 
		The {\em Poisson-Szeg\"o} kernel is defned as,
	$$
	\cP(z,\zeta):= \frac{|S_\Omega(z,\zeta)|^2}{S_\Omega (z,z)},\ \text{for}\ z\in \Omega\ \text{and}\ \zeta \in \partial \Omega.
	$$ We shall state two remarkable properties of $\cP$ which will be useful later.
	\begin{enumerate}
		\item[(a)]
		It is well known that, the Poisson-Szeg\"o kernel reproduces functions in $\cA(\Omega)$ i.e., for $f\in \cA(\Omega)$,
		$$
		f(z)= \int_{\partial \Omega} f(\zeta) \cP(z,\zeta) d\omega_{z_0}(\zeta), \text{ for all } z \in \Omega.$$
		\item[(b)]  $\cP(z_0, \zeta)=1$ and the real-valued function $\zeta \rightarrow \cP(z,\zeta)$ is continuous on $\partial \Omega$.
	\end{enumerate}

	\noindent \textbf{Notation:} Let us consider the subspace
	$$\cW(\Omega, \omega_{z_0}) = H^2(\Omega,\omega_{z_0}) + \overline{H^2(\Omega,\omega_{z_0})}= \left\lbrace f + \overline{g} : f, g\in H^2(\Omega, \omega_{z_0})   \right \rbrace.$$ 
	Then we can decompose $L^2(\omega_{z_0})$ as $L^2(\omega_{z_0})= \overline{\cW(\Omega, \omega_{z_0})}^{L^2(\omega_{z_0})} \oplus \cW(\Omega, \omega_{z_0})^\perp $. Measures, which can occur in integral representations for Herglotz class functions of several variables, have often been objects of studies, see for example \cite{L-N}. This is related to the quotienting alluded to in the Introduction. The class of measures defined below will feature in Theorem \ref{Her-multi}.
	\begin{definition} \label{deter}
		The real measures $\nu$ in $C(\partial \Omega)^*$ which satisfy $\int_{\partial \Omega} g d\nu = 0$ for any $g \in C(\partial \Omega) \cap \cW(\Omega,\omega_{z_0})^\perp$ will be called the determining measures and will be denoted by $\operatorname{RP}(\partial \Omega)$. 
	\end{definition}
\noindent	Note that the measures in $\operatorname{RP}(\partial \Omega)$ have a dependency on the measure $\omega_{z_0}$ related to the underlying Hardy space.
	It is clear that the set $\left \lbrace s_z, \overline{s_z} : z\in \Omega\right \rbrace$ is total in $\cW(\Omega, \omega_{z_0})$.
	
	\begin{lemma}\label{lemma-2}
		Fix $z\in \Omega$. Then the function $\psi(z,\zeta)= \cP(z,\zeta) -\left(s_z(\zeta) + \overline{s_z(\zeta)} -1 \right)$ is in $\cW(\Omega, \omega_{z_0})^\perp$. Moreover, $\psi(z,\cdot)$ is a real-valued function in $C(\partial \Omega)$.
	\end{lemma}
	\begin{proof}
		In view of  the above observation, it is enough to show that, $\langle \psi(z,\cdot), s_w \rangle = 0$ and $\langle \psi(z,\cdot), \overline{s_w} \rangle= 0$ for every $w\in \Omega$. Indeed, for $w\in \Omega$, using the reproducing property of $\cP$ and admissibility of the Szeg\"o kernel we have:
		\begin{align*}
			\left\langle \psi(z,\cdot), s_w  \right \rangle 
			&=\int_{\partial \Omega} \left[\cP(z, \zeta)- \left(s_z(\zeta) + \overline{s_z(\zeta)}-1\right)\right] \overline{s_w(\zeta)} d\omega_{z_0}(\zeta) \\
			&= \overline{s_w(z)} - \int_{\partial \Omega} \left[ s_z(\zeta)\overline{s_w(\zeta)} + \overline{s_z(\zeta) s_w(\zeta)} - \overline{s_w(\zeta)} \right] d\omega_{z_0}(\zeta) \\
			&= S_\Omega (w,z)-S_\Omega (w,z) -\langle s_{z_0}, s_z s_w \rangle  +  \langle s_{z_0}, s_w \rangle\\
			&=0.
		\end{align*}
		In a similar manner one can show that $\langle \psi(z,\cdot), \overline{s_w} \rangle =0$. Hence the proof.
	\end{proof}
	
	\subsection{The Koranyi-Pukanszky formula}
	It is clear from the previous lemma that $\cP(z,\zeta) = \operatorname{Re}(2S_\Omega (z,\zeta) -1) + \psi(z,\zeta)$ and that $2S_\Omega (z,\zeta) -1$ is holomorphic in the first argument in $\Omega$ and continuous in the second argument on $\partial \Omega$.

	\begin{thm}\label{Her-multi}
	Let $z_0 \in \Omega$. Let $\omega_{z_0}$ be the harmonic measure. Then the function
		\begin{align*}
			h(z)= \int_{\partial \Omega} (2S_\Omega (z,\zeta) -1) d\eta(\zeta), \ \text{for}\ z\in \Omega
		\end{align*} is holomorphic with non-negative real part for any determining  probability measure $\eta$.

Conversely, for every $f$ in $\mathfrak{H}_{z_0}(\Omega)= \{f\in \mathfrak{H}(\Omega): f(z_0)=1 \}$, there exists a regular Borel probability measure $\mu \in  \operatorname{RP}(\partial \Omega)$ such that
		\begin{align}\label{rep-multi}
			f(z)= \int_{\partial \Omega} \left(2S_\Omega (z, \zeta)-1 \right) d\mu(\zeta).
		\end{align}
	\end{thm}
	
		\begin{proof} The first part is the easier one. Since $2S_\Omega(z, \zeta)-1$ is continuous on $\partial \Omega$ in the second argument and the measure $\eta $ is finite, an application of dominated convergence shows that the function
		$$
		z \mapsto h(z)= \int_{\partial \Omega} \left[2S_\Omega(z, \zeta)-1 \right] d\eta (\zeta)
		$$ 
		is continuous in $\Omega $. Now, for any triangular path $\Delta$ in $\Omega $, an application of Cauchy's theorem along with Fubini's theorem yields
		\begin{align*}
			\int_{\Delta} \int_{\partial \Omega} \left[ 2S_\Omega(z, \zeta)-1\right] d\eta(\zeta) dz =0.
		\end{align*}  
		Therefore, by Morera's theorem $h$ is holomorphic in $\Omega$.
		Note that, for each $z\in \Omega$, \begin{align*}
			\operatorname{Re}h(z)&= \operatorname{Re}\int_{\partial \Omega} \left[ 2S_\Omega(z,\zeta)-1\right] d\eta(\zeta)\\
			&= \int_{\partial \Omega} \left(s_z(\zeta)+ \overline{s_z(\zeta)}-1 \right) d\eta(\zeta)\\
			&= \int_{\partial \Omega} \cP(z,\zeta) d\eta(\zeta) - \int_{\partial \Omega} \psi(z,\zeta) d\eta(\zeta) \\
			& \geq 0 \ (\text{by Lemma \ref{lemma-2} and since}\ \eta \in \operatorname{RP}(\partial \Omega) ). \end{align*}
In the converse direction, for an orthonormal basis $\{e_n\}_{n=1}^\infty$ of $H^2(\Omega, \omega_{z_0})$, we have
$$ S_\Omega (z,w)= \sum_{j=1}^\infty e_j(z)\overline{e_j(w)} \ \text{for}\ z, w\in \Omega.$$ It is holomorphic in the first variable and anti-holomorphic in the second variable. It is also important that for $w\in \Omega$, the kernel function $s_w:\Omega \to \mathbb{C}$ defined as $s_w(z)=S_\Omega(z,w)$ is in $\cA(\Omega)$. Note that, for every $f\in H^2(\Omega, \omega_{z_0})$,
		$$ f(z_0)= \langle f, s_{z_0} \rangle_{H^2(\Omega,\omega_{z_0})} = \langle f, 1\rangle_{H^2(\Omega, \omega_{z_0})} =\int_{\partial \Omega} f d\omega_{z_0}.$$ Therefore, $s_{z_0}\equiv 1$. Also, every $H^2(\Omega, \omega_{z_0})$ function has a non-tangential limit (boundary value) in $L^2(\omega_{z_0})$. Recall that the collection of real measures that are in $C(\partial \Omega)^*$ such that $\int_{\partial \Omega} g d\mu = 0$ for any $g \in C(\partial \Omega) \cap \cW(\Omega, \omega_{z_0})^\perp$ is denoted by $\operatorname{RP}(\partial \Omega)$. Moreover, in this case the measures in $\operatorname{RP}(\partial \Omega)$ can be described more precisely in the following way. The space $\cW(\Omega, \omega_{z_0})^\perp $ is the complex linear span of $m$ linearly independent functions $Q_1, \dots, Q_m$ in $C(\partial \Omega)$. See, Section 4.5 of Chapter 4 in \cite{Fisher-book}. Therefore a measure $\eta$ which represents a function in $\mathfrak{H}_{z_0}(\Omega)$ via the said integral formula, satisfies the conditions $$ \int_{\partial \Omega } Q_r d\eta =0 \ \text{for}\ r=1,\dots, m.$$

For $f\in \mathfrak{H}_{z_0}(\Omega)$, consider the bounded holomorphic function $g= \frac{1-f}{1+f}$ on $\Omega$. Then by our Theorem \ref{CR-M}, there exists a sequence of inner functions $\{g_n\}_{n=1}^\infty \subseteq \cA(\Omega)$ such that $g_n \to g $ uniformly on each compact subset of $\Omega$. Since $g(z_0)=0$, a closer look at the proof of Theorem \ref{CR-M} tells us that $g_n(z_0) =0$ (as they are the Cayley transforms of convex combinations of $\Gamma_{\alpha}$ and $\Gamma_{\alpha}(z_0) =1$). So, $\|(1-\frac{1}{n}) g_n \|_{\infty} <1$ and hence $$ f_n := \frac{1-(1-\frac{1}{n}) g_n}{1+(1-\frac{1}{n}) g_n}$$ is in $\cA(\Omega)$ with $f_n(z_0)=1$. Also, $f_n(z) \to f(z)$ for every $z\in \Omega$. It is easy to see that $\operatorname{Re}f_n(z) \geq 0$ for each $z\in \Omega$. Now we consider the sequence of regular Borel measures $\mu_n = \operatorname{Re}f_n d\omega_{z_0}$ on $\partial \Omega$. Then $\mu_n(\partial \Omega) = \operatorname{Re}f_n (z_0) =1.$ So, the sequence of non-negative measures $\mu_n$ is bounded and hence by Helly's selection principle, there exists a subsequence $\{\mu_{n_k}\}$ such that $\mu_{n_k} \xrightarrow{\text{weak*}} \nu$, for some regular Borel probability measure $\nu$ on $\partial \Omega$. Let $u \in C(\partial \Omega) \cap \cW(\Omega, \omega_{z_0})^\perp$. It is quite easy to check that
		$$  \int_{\partial \Omega} u d\mu_{n_k} =0 \ \text{for every}\ k.$$ By the weak* convergence of the sequence of measures, we get 
		$$
		\int_{\partial \Omega} u d\nu =0.
		$$
		Therefore $\nu \in \operatorname{RP}(\partial \Omega)$. Also for each $z\in \Omega $
		$$ \int_{\partial \Omega} \left[ 2S_\Omega(z,\zeta)-1\right] d\mu_{n_k}(\zeta) \rightarrow \int_{\partial \Omega} \left[ S_\Omega(z,\zeta)-1\right] d\nu(\zeta) = h(z) \ (\text{say}). $$
		Further,
		\begin{align*}
			\operatorname{Re} \int_{\partial \Omega} \left[ S_\Omega(z,\zeta)-1\right] d\mu_{n_k}(\zeta) 
			&= \int_{\partial \Omega} \left( \cP(z,\zeta)-\psi(z,\zeta) \right) \operatorname{Re}f_{n_k}(\zeta) d\omega_{z_0}(\zeta)\\
			&=\operatorname{Re} \int_{\partial \Omega} \cP(z,\zeta) f_{n_k}(\zeta) d\omega_{z_0}(\zeta) - \langle \psi(z,\cdot), \operatorname{Re}f_{n_k}\rangle_{L^2(\partial\Omega, \omega_{z_0})} \\
			&=\operatorname{Re} f_{n_k}(z) \ \left(\text{since} \ \mu_{n_k} \in \operatorname{RP}(\partial \Omega)\right)\\
			& \rightarrow \operatorname{Re} f(z) \ \text{as}\ k \to \infty.
		\end{align*} 
		Therefore, $\operatorname{Re}f(z)= \operatorname{Re} h(z)$, for every $z\in \Omega$. Also, note that $f(z_0)=1= h(z_0)$. Hence, $f(z)=h(z)$ for all $z\in \Omega.$ This proves the integral representation \eqref{rep-multi} for a function in $\mathfrak{H}_{z_0}(\Omega)$.

	\end{proof}

\subsection{The equivalence}

 Theorem \ref{CR-M} implies Theorem 1.1.21 in \cite{AHR}, showing that Carath\'eodory's approximation is equivalent to Herglotz's representation obtained via convexity. 
		Indeed, consider  a holomorphic map $f : \Omega \rightarrow \mathbb{C}$ such that $\operatorname{Re} f(z)\geq 0$ for all $z \in \Omega$.  Assume, without loss of generality, that  $f(z_0)=1$.
		The Cayley transform $\theta= \left(1-f\right)\left(1+f\right)^{-1}$ of $f$ is holomorphic in $\Omega$ and $|\theta(z)|\leq 1$ for all $z\in\Omega$. Also, $\theta(z_0)=0$. Invoke Carath\'eodory's Theorem \ref{CR-M} to get a sequence of inner functions $\{\theta_k\}$ in $\mathscr{RI}(\Omega)$ with $\theta_k(z_0)=0$ such that $\theta_k\rightarrow \theta$ uniformly on compacta in $\Omega$. Let $\psi_k$ denote the inverse Cayley transform of $\theta_k$. Recall the structure of functions in $\mathscr{RI}(\Omega)$ from \eqref{Inner-Mul}. It immediately follows that 
	
		$$
		\psi_k = \sum_{j=1}^{n(k)} \lambda_j^{(k)} \Gamma_{\alpha_j^{(k)}}. 
		$$
		for some positive integer $n(k)$, finitely many points $\alpha_j^{(k)}$ in $T_{\Omega}$ for $j=1,\dots, n(k)$  and non-negative scalars $\lambda_j^{(k)}$ such that $\sum_{j=1}^{n(k)} \lambda_j^{(k)} =1 $.
		
		Let $\mu_k$ be the measure $\sum_{j=1}^{n(k)} \lambda_j^{(k)} \delta_{\alpha_j^{(k)}}$ on $T_\Omega$.  Since $\{\mu_k\}_{k\geq 1}$ is a sequence of regular Borel probability measures on $T_{\Omega}$, we can extract a subsequence $\{\mu_{k_l}\}$ such that $\mu_{k_l} \xrightarrow{\text{weak}^*} \nu$  for some regular Borel probability measure $\nu$ on $T_{\Omega}$. Therefore 
		\begin{align*}
			\psi_{k_l} (z)= \int_{T_{\Omega}} \Gamma_\alpha (z) d\mu_{k_l} (\alpha) \rightarrow \int_{T_{\Omega}} \Gamma_\alpha (z) d\nu (\alpha)
		\end{align*}
by continuity of $ \Gamma_\alpha$ in $\alpha$ (as a consequence of Lemma 1.2.20 in \cite{AHR}). But $\psi_{k_l}(z)$ converges to $f(z)$ for each $z$. Hence we have 
		$
		f(z)= \int_{T_{\Omega}} \Gamma_\alpha (z) d\nu (\alpha).
		$
	
	\subsection{ Carath\'eodory's theorem for matrix-valued functions}

\emph{There is a crucial difference between  Herglotz's representations of $N\times N$ matrix-valued holomorphic functions with non-negative real part on the unit disc and on a multi-connected domain}. 

Recall from \eqref{theta} that in the former case, the integrands are scalar-valued functions (extreme points of $\mathfrak H_{0}(\mathbb D)$) and the measure is positive matrix-valued. In the latter case, the integrand functions are matrix-valued. They are the extreme points of $\mathfrak H_{z_0}(\Omega, \mathcal B (\mathbb C^N))$ whereas the measure is a probability measure, see \cite[Theorem 3.5]{BH}. We shall prove Carath\'eodory's approximation result for matrix-valued functions on multi-connected domains using the results of Section 3 of \cite{BH}. 

Let $T_\Omega^N$ be the union of all points $(\mathbf{u},\mathbf{W})$ where $\mathbf{u}=(u_1,\dots, u_n)  \text{ and } \mathbf{W}=(W_1, \dots, W_n)$ for some natural number $m+1 \le n \le (m+1)N^2$ satisfying $u_1, \dots, u_n$ are $n$ distinct points in $\partial \Omega$ and $W_1, \dots, W_n$ are $N\times N$ non-negative definite matrices satisfying properties (iii) and (iv) in Theorem 3.3 of \cite{BH}. These properties stem from the characterization of the extreme points of a certain class of positive matrix-valued measures. A satisfactory explanation of them will require more space than we want to spend. Instead, we refer the interested reader to the excellently written Subsection 3.2 of \cite{BH}. It is known from \cite{BH} that the extreme points $F_{\mathbf{u},\mathbf{W}}$ of the Herglotz class $\mathfrak H_{z_0}(\Omega, \mathcal B (\mathbb C^N))$ are indexed by $T_\Omega^N$. 

\begin{thm}[ Carath\'eodory's approximation for matrix-valued functions]
	Let $\theta : \Omega \rightarrow M_N(\mathbb{C})$ be a holomorphic function on $\Omega$ such that $\|\theta(z)\| \leq 1$ for every $z\in \Omega$. Then $\theta$ can be approximated (uniformly on compacta in $\Omega$) by a sequence of inner functions.
\end{thm} 
	\begin{proof} Here is a sketch of the proof. We deal only with the case when $\|\theta(z)\|<1$ for all $z \in\Omega $. Let $f$ be the Cayley transform of $\theta$, i.e., 
		$$
		f(z)= (I-\theta(z))(I+ \theta(z))^{-1}. 
		$$
		By an analysis similar to what we did earlier, we can take $f$ to be in a normalized form. Since the set of extreme points of the compact convex set $\mathfrak H_{z_0}(\Omega, \mathcal B (\mathbb C^n))$ (in the compact-open topology which is metrizable)  is 
$$ \left\lbrace F_{\mathbf{u}, \mathbf{W}}: (\mathbf{u}, \mathbf{W})\in T_{\Omega}^N \right \rbrace ,
	$$
the Krein-Milman theorem implies that the function $f$ can be approximated uniformly on each compact subset of $\Omega$ by a sequence of $N\times N$ matrix-valued functions $\{f_n\}$ which are convex combinations of the form
	\begin{align}\label{structure}
		f_n = \sum_{j=1}^{r(n)} \lambda_j^{(n)} F_{\mathbf{u}_j^{(n)}, \mathbf{W}_j^{(n)}} \text{ such that } \sum_{j=1}^{r(n)} \lambda_j^{(n)} =1 \text{ and } \lambda_j^{(n)} \geq 0
	\end{align}
for certain points $(\mathbf{u}_j^{(n)}, \mathbf{W}_j^{(n)})$ in $T_\Omega^N$.  By the very nature of $ F_{\mathbf{u}, \mathbf{W}}$, it satisfies $ \operatorname{Re} F_{\mathbf{u}, \mathbf{W}} (\zeta) = 0$ for $\zeta$ in $\partial \Omega$ except for a finitely many points. Thus, we have $ \operatorname{Re} f_n(\zeta)= 0$ for $\zeta$ in $\partial \Omega$ except for a finitely many points. 

		Taking the inverse Cayley transform, we see that the matrix-valued functions
		$$
		\theta_n = (I-f_n)(I+f_n)^{-1}
		$$
		 are inner and converge to $ \theta$  uniformly on compacta in $\Omega$. 
	\end{proof}
	
	\section{Integral representation for operator-valued Herglotz functions} \label{Opvalued}

\subsection{Uniqueness of  Herglotz's representation in scalar case:}
In this subsection we investigate the uniqueness of the determining measures in the Koranyi-Pukanszky type formula for a special class of complete Reinhardt domains {\em without any assumption on the regularity of the boundary}. The uniqueness of the measure is important because it always leads to Herglotz's representation for operator-valued functions. More details on how it is done is in Theorem \ref{operator-valued}.
	
Let $\Omega$ be a bounded, strongly star-like Reinhardt domain centred at 0 in $\bC^d$. Suppose $\sigma$ is a finite regular positive Borel measure on $\partial \Omega$ with the support $F_\sigma$ not contained in the union of the coordinate planes and it is a {\em $\bT^d$-invariant} measure i.e.,  
of the form: $$d\sigma(\mathbf{z}) = d\mu(|z_1|, \dots, |z_d|) dm, \ \mathbf{z}=(z_1, \dots, z_d)$$ where $\mu$ is supported on $\{(|w_1|,\dots, |w_d|): \mathbf{w}=(w_1, \dots, w_d)\in \partial \Omega \}$ and $m$ is the normalized Haar measure on $\bT^d$, see \cite{Curto}. We also assume that $\sigma$ is {\em massive} on the Shilov boundary $\partial_S(\Omega)$ that is, for any Borel subset $E \subseteq \partial \Omega$ if $\sigma(E)=0$ then 
$$ \overline{\partial \Omega\setminus E} \supset \partial_S\Omega.$$

\textbf{Claim:} $\partial_S\Omega \subseteq F_\sigma$.

\noindent To prove the claim, we assume on the contrary that there exists $\boldsymbol{\zeta} \in \partial_S\Omega \setminus F_\sigma $. Then there must exist an open set $N_{\boldsymbol{\zeta}}$ in $\partial \Omega$ containing $\boldsymbol\zeta$ such that $\sigma(N_{\boldsymbol{\zeta}})=0$. Thus we have $\partial \Omega\setminus N_{\boldsymbol{\zeta}}= \overline{\partial \Omega\setminus N_{\boldsymbol{\zeta}}} \supset \partial_S\Omega $ and this is not possible since $\boldsymbol{\zeta} \in \partial_S\Omega$.

 Now, consider the holomorphic hull of $F_\sigma$, viz.,
$$
\widehat{F_\sigma}= \left\lbrace  \textbf{z}\in \bC^d: |g(\textbf{z})|\le \sup_{\textbf{w}\in F_\sigma} |g(\textbf{w})|  \text{ for all entire functions }\ g \right\rbrace.
$$
In a similar way one can define $\widehat{\partial_S\Omega}$ and then it is clear from the above claim that 
$$\overline{\Omega} \subseteq \widehat{\partial_S\Omega} \subseteq \widehat{F_\sigma} . $$

Denote $\hat{\sigma}(\alpha, \beta) = \int \textbf{z}^\alpha \bar{\textbf{z}}^\beta d\nu(\textbf{z})$ for $(\alpha, \beta) \in \mathbb{Z}_{+}^d \times \mathbb{Z}_{+}^d$. We quote two lemmas from \cite{Kolaski}.
\begin{lemma}{(\cite[Lemma 2.1.4]{Kolaski})}
The domain of convergence of the power series $\sum_{\alpha \in \mathbb{Z}_{+}^d} \frac{\mathbf{z}^{2\alpha}}{\hat{\sigma}(\alpha, \alpha)}$ is precisely the interior $(\widehat{F_\sigma})^\circ$.
\end{lemma}

\begin{lemma}{(\cite[Lemma 2.1.5]{Kolaski})}
The series \begin{align*}
	S_{\Omega, \sigma}(\mathbf{z},\mathbf{w})= \sum_{\alpha \in \mathbb{Z}_{+}^d} \frac{\mathbf{z}^\alpha \bar{\mathbf{w}}^\alpha}{\hat{\sigma}(\alpha, \alpha)}
\end{align*}
converges absolutely and uniformly on $A \times \widehat{F_\sigma} $ for any compact subset $A$ of $(\widehat{F_\sigma})^\circ$.
\end{lemma}

Note that we have $\overline{\Omega} \subseteq \widehat{F_\sigma}$ and hence $\Omega \subseteq (\widehat{F_\sigma})^\circ.$ Hence the series representing $S_{\Omega, \sigma}$ converges absolutely and uniformly on compact subsets of $A \times \overline{\Omega}$ for any compact subset $A$. The existence of such a function (referred to as the Szeg\"o kernel) and their continuity on the boundary appeared in \cite{Dautov} but we could not find a proof and that is why we mention the brief proof through the lemmas above. A Koranyi-Pukanszky type representation of a Herglotz function $f$ on $\Omega$ as the following
\begin{align}\label{KP-starlike}
f(\mathbf{z})=i \operatorname{Im}f(0)+ \int_{F_\sigma} (2S_{\Omega, \sigma}(\mathbf{z}, \boldsymbol{\zeta}) -1 ) d\nu(\boldsymbol{\zeta})
\end{align}
has been obtained in \cite{Dautov}. The measure $\nu$ is a finite regular non-negative Borel measure on $F_\sigma$ such that $$\int_{F_\sigma} g d\nu = 0$$ for every continuous function $g$ in $L^2(\sigma)\ominus [\cA(\Omega)+ \overline{\cA(\Omega)}]$ where $\overline{\cA(\Omega)}$ denotes the set of complex conjugates of the functions in $\cA(\Omega)$. We denote the collection of all such measures by $\operatorname{RP}(F_\sigma)$.

\begin{thm}
The measure $\nu$ in $\operatorname{RP}(F_\sigma)$ representing the Herglotz function $f$ in $\Omega$ via the integral representation \eqref{KP-starlike} is unique.
\end{thm}

\begin{proof}
 Indeed, if there are two such measures $\mu_1$ and $\mu_2$ for which \eqref{KP-starlike} holds, then 
\begin{align}
\int_{F_\sigma} (2S_{\Omega, \sigma}(\mathbf{z}, \boldsymbol{\zeta}) -1 ) d(\mu_1 -\mu_2)(\boldsymbol{\zeta})=0 \ \text{for every}\ \mathbf{z}\in \Omega.
\end{align}
Taking real parts into consideration we get

\begin{align}\label{unique}
\int_{F_\sigma} (2 \operatorname{Re}S_{\Omega, \sigma}(\mathbf{z}, \boldsymbol{\zeta}) -1 ) d(\mu_1 -\mu_2)(\boldsymbol{\zeta})=0 \ \text{for every}\ \mathbf{z}\in \Omega.
\end{align}

Now, an argument similar to Lemma \ref{lemma-2} implies that for every $\mathbf{z} \in \Omega$ the function 

$$\boldsymbol{\zeta} \ \xrightarrow{g_\mathbf{z}}\ \frac{|S_{\Omega, \sigma}(\mathbf{z}, \boldsymbol{\zeta})|^2}{S_{\Omega, \sigma}(\mathbf{z}, \mathbf{z})} -\left(S_{\Omega,\sigma}(\mathbf{z}, \boldsymbol{\zeta}) + \overline{S_{\Omega,\sigma}(\mathbf{z}, \boldsymbol{\zeta})} -1 \right)$$
 is continuous on $\partial \Omega$ and $\int_{F_\sigma} g_{\mathbf{z}} d\mu_j =0$ for $j=1,2.$ This observation along with \eqref{unique} implies the following for every $\mathbf{z} \in \Omega$,

\begin{align*}
&\int_{F_\sigma} \frac{|S_{\Omega, \sigma}(\mathbf{z}, \boldsymbol{\zeta})|^2}{S_{\Omega, \sigma}(\mathbf{z}, \mathbf{z})} d(\mu_1 -\mu_2)(\boldsymbol{\zeta}) = 0 \\
\text{ i.e., } & \int_{F_\sigma} |S_{\Omega,\sigma}(\mathbf{z}, \boldsymbol{\zeta})|^2 d(\mu_1 -\mu_2)(\boldsymbol{\zeta}) =0 \\
\text{ i.e., } & \sum_{\alpha, \beta \in \mathbb{Z}_{+}^d} [\hat{\sigma}(\alpha, \alpha) \hat{\sigma}(\beta, \beta)]^{-1} \mathbf{z}^\alpha \bar{\mathbf{z}}^\beta \int_{F_\sigma}  \bar{\boldsymbol{\zeta}}^\alpha \boldsymbol{\zeta}^\beta d(\mu_1 -\mu_2) = 0 \\
\text{ i.e., } & \int_{F_\sigma} \bar{\boldsymbol{\zeta}}^\alpha \boldsymbol{\zeta}^\beta d(\mu_1 -\mu_2) =0 \ \text{for all}\ \alpha, \beta \in \mathbb{Z}_{+}^d.
\end{align*}
Therefore, $\mu_1- \mu_2$ annihilates all the polynomials in $\boldsymbol{\zeta}$ and $\bar{\boldsymbol{\zeta}}$ and hence by the Stone-Weierstrass theorem $\mu_1 =\mu_2$.

\end{proof}

In view of this large class of domains where the measure appearing in Herglotz's representation is unique, we have the following.

\subsection{Extension of Herglotz's representation to operator-valued case:}	
	
	There is a general scheme to obtain integral representations for operator-valued Herglotz functions whenever we have a unique representation for each scalar-valued function with respect to some fixed integral kernel. 
	\begin{thm} \label{operator-valued}
		 Let $\Omega$ be a bounded domain. Let $T$ be a closed subset of $\partial \Omega$. Let  $\mathbf{z}_0$ be a fixed point in $\Omega$. Suppose there is a function $H$, holomorphic in the first variable, continuous in the second variable on $\Omega \times T$ and $H(\mathbf{z}_0, \cdot)=1$ such that for every function $f$ in $\mathfrak{H}(\Omega)$, there is a unique non-negative Borel measure $\nu$ on $T$ satisfying an integral representation, i.e., 
\begin{align} \label{unique-rep} f(\mathbf{z})= i\operatorname{Im}f(\mathbf{z}_0)+ \int_{T} H(\mathbf{z},\boldsymbol{\zeta}) d\nu(\boldsymbol{\zeta}) \text{ for each } \mathbf{z} \text{ in } \Omega. \end{align}
Then a $\cB(\cH)$-valued function $f$ is in $\mathfrak{H}\left(\Omega, \cB(\mathcal{H}) \right)$  if and only if there exists a unique non-negative $\cB(\cH)$-valued bounded regular Borel measure $E$ on $T$ such that 
		\begin{align}\label{matrix-valued-rep}
			f(\mathbf{z})= i\operatorname{Im}f(\mathbf{z}_0)+ \int_{T} H(\mathbf{z},\boldsymbol{\zeta}) dE(\boldsymbol{\zeta})  \text{ for each } \mathbf{z} \text{ in } \Omega, 
		\end{align}
		 Moreover, if $M(\Omega, T)$ denotes  the class of all non-negative Borel measures $\nu$ on $T$ which give rise to Herglotz class functions through formula \eqref{unique-rep}, then the non-negative measure $\langle E(\Delta)h, h \rangle$ defined on Borel subsets $\Delta$ of $T$ is in $M(\Omega, T)$ for every non-zero $h$ in $\mathcal H$.
	\end{thm} 
	\begin{proof}
		If we have a function $f$ satisfying equation \eqref{matrix-valued-rep}, then it is easy to see that $f$ is holomorphic. Also, 
		$$\langle f(\mathbf{z}) h, h\rangle =i\operatorname{Im}f(\mathbf{z}_0)+  \int_{T} H(\mathbf{z},\boldsymbol{\zeta}) dE_{h,h}(\boldsymbol{\zeta}).$$ Therefore, the existence of the integral representation for scalar-valued functions proves that $f\in \mathfrak{H}(\Omega, \cB(\cH))$. 
		
		Conversely, for $f \in \mathfrak{H}\left(\Omega, \cB(\mathcal{H}) \right)$, define for each $h\in \cH$, $f_h: \Omega \to \bC $ by $$f_h(\mathbf{z})=\langle f(\mathbf{z})h, h \rangle.$$ 
		Then, the integral representation for scalar-valued functions implies that there exists a unique non-negative measure $\mu_h$ such that
		$$
		f_h(\mathbf{z})= i\operatorname{Im}f_h(\mathbf{z}) + \int_{T} H(\mathbf{z},\boldsymbol{\zeta})d\mu_h(\boldsymbol{\zeta})
		$$
		with $\mu_h(T)= \langle \operatorname{Re}f(\mathbf{z}_0) h, h \rangle$.
		Let us define a $\cB(\cH)$-valued measure $E$ on the Borel sigma algebra of $T$ as follows:
		\begin{align}\label{quadratic-form}
			\left\langle E(\Delta)h, h \right\rangle = \mu_h(\Delta) 
		\end{align}
		for any Borel set $\Delta \subset T$. Then for $\alpha \in \bC$, $$ \left\langle E(\Delta)(\alpha h), \alpha h \right\rangle = \mu_{\alpha h}(\Delta). $$ Note that the measure $\mu_{\alpha h}$, represents the function $$\mathbf{z} \mapsto \langle f(\mathbf{z})(\alpha h), \alpha h \rangle = |\alpha|^2 f_h(\mathbf{z}).$$ Therefore, by the uniqueness of the measure in the integral representation for the scalar case we have $$\mu_{\alpha h} = |\alpha|^2 \mu_h.$$ So, the function $\cQ_{\Delta}: \cH \to \bC $ defined as
		$$ \cQ_{\Delta}(h)=\left\langle E(\Delta)h, h \right\rangle \ \text{for}\ h\in \cH,$$ is a quadratic form with $|\cQ_{\Delta}(h)|\leq \|\operatorname{Re}f(\mathbf{z}_0)\| \|h\|^2$. Therefore polarization of $\cQ_{\Delta}$ induces a bounded sesquilinear form on $\cH \times \cH$ which in turn provides a bounded operator on $\cH$, call it $E(\Delta)$ again. This justifies why we have claimed $E(\Delta)$ to be a bounded operator in the definition of $E$. Positivity of $E(\Delta)$ is obvious from $\eqref{quadratic-form}$. Now it is routine to check that $E$ is a regular Borel measure with 
		$$\operatorname{Sup}\left\lbrace \|E(\Delta)\|: \Delta \subseteq_{\text{Borel}} T \right \rbrace = \|\operatorname{Re}f(\mathbf{z}_0)\| <\infty.$$ 
		
		\textbf{Claim:} The function $f$ is of the form \eqref{matrix-valued-rep}, for the $\cB(\cH)$-valued measure $E$.
		
		To prove the claim, take any $h\in \cH$. Then
		\begin{align*}
			\left \langle \left[i \operatorname{Im}f(\mathbf{z}_0) + \int_{T} H(\mathbf{z},\boldsymbol{\zeta}) dE(\boldsymbol{\zeta})\right] h, h \right \rangle 
			&= i \operatorname{Im}\left \langle f(\mathbf{z}_0) h, h \right \rangle +  \int_{T} H(\mathbf{z}, \boldsymbol{\zeta})dE_{h,h} \\
			&= f_h(\mathbf{z})= \langle f(\mathbf{z})h, h\rangle.
		\end{align*}
		Therefore by polarization, we have a representation of $f$ as in \eqref{matrix-valued-rep}. Our construction of the positive operator-valued measure $E$ justifies the last sentence of the statement of the theorem. The uniqueness of the measure $E$ follows from the uniqueness of the measures $E_{h,h}=\mu_h$.
	\end{proof}
	
	\section{An exceptional domain}
	For $d\geq 2$, let $s_1,\dots, s_d$ be the elementary symmetric polynomials in $d$ variables. Define $s: \bD^d \rightarrow \bC^d$ by 
	$$ s(\mathbf{z})=\left( s_1(\mathbf{z}), \dots, s_d(\mathbf{z})\right) .$$
	Let $J$ be the complex Jacobian of this map. Let $m$ be the normalized Haar measure on the torus $\bT^d$. The symmetrized polydisc $\bG_d := s(\bD^d)$ is non-convex, is not star-like for $d\geq 3$, and does not have a $C^2$-smooth boundary, see \cite{Jar-Pflug}. Nevertheless, this domain has a Szeg\"o kernel 
	$$ S_{\bG_d}(s(\mathbf{z}), s(\mathbf{w}))= \prod_{i,j=1}^d (1-z_i \bar{w}_j)^{-1} \text{ for } \mathbf{z},\mathbf{w}\in \bD^d. $$ See \cite{MRZ}. The corresponding reproducing kernel Hilbert space $H^2(\bG_d)$ consists of holomorphic functions on $\bG_d$ and can be isometrically embedded into $L^2(b\bG_d, \nu)$ where 
	$$ b\bG_d = \{s(\mathbf{z}): \mathbf{z}\in \bT^d \}$$ is the distinguished boundary of $\bG_d$ and $\nu$ is the Borel measure on $b\bG_d$ obtained as the push-forward of the measure $|J|^2 dm$ on $\bT^d$. Using the Poisson-Szeg\"o kernel for $\bG_d$ and a scaling technique, it can be shown that $f \in \mathfrak{H}_0 (\bG_d)$ if and only if there is a probability measure $\eta$ on $\bG_d$ satisfying 
	$$
	\int_{b\bG_d} g d\eta =0 \text{ for all } g\in C(b\bG_d) \cap \left(H^2(\bG_d) + \overline{H^2(\bG_d)}  \right)^\perp 
	$$ 
	such that 
	
$$f(\mathbf s)= \int_{b\bG_d} \left[2S_{\bG_d}(\mathbf{s}, \mathbf{t})-1 \right] d\eta(\mathbf{t}) $$
	for every $\mathbf s$ from $\bG_d$. 
	
	Carath\'eodory's approximation theorem for the symmetrized polydisc has been found in \cite{BK}. Whether Herglotz's representation and Carath\'eodory's approximation are equivalent on the polydisc or on the symmetrized polydisc remains an open problem.


\end{document}